\documentclass[letterpaper, 10 pt, conference]{ieeeconf}  

\IEEEoverridecommandlockouts                              
\overrideIEEEmargins

\usepackage{graphicx}
\usepackage{bm}
\usepackage{color}
\newtheorem{theorem}{Theorem}
\newtheorem{lemma}{Lemma}

\newtheorem{remark}{Remark}

\newcommand{\E}{\mathbb{E} }


\usepackage{amsmath} 
\usepackage{amssymb}  

\title{\LARGE \bf
Mean Square Capacity of Power Constrained Fading Channels with Causal Encoders and Decoders*
}

\author{Liang Xu$^{1}$, Lihua Xie$^{1}$ and Nan Xiao$^{2}$
\thanks{*This work was supported by the National Research Foundation of Singapore under Grant NRF2011NRF$\textrm{-}$CRP001$\textrm{-}$090 and the National Natural Science Foundation of China under Grant 61304044.}
\thanks{$^{1}$Liang Xu and Lihua Xie are with EXQUISITUS, Centre for E-City, School of Electrical and Electronic Engineering, Nanyang Technological University, Singapore 639798, Singapore
        {\tt\small lxu006@e.ntu.edu.sg, elhxie@ntu.edu.sg}}%
\thanks{$^{2}$Nan Xiao is with the Singapore-MIT Alliance for Research and Technology Centre, Singapore 138602, Singapore
        {\tt\small xiaonan@smart.mit.edu}}%
}

\begin{document}
\maketitle
\thispagestyle{empty}
\pagestyle{empty}

\begin{abstract}
This paper is concerned with the mean square stabilization problem of discrete-time LTI systems over a power constrained fading channel. Different from existing research works, the channel considered in this paper suffers from both fading and additive noises. We allow any form of causal channel encoders/decoders, unlike linear encoders/decoders commonly studied in the literature. Sufficient conditions and necessary conditions for the mean square stabilizability are given in terms of channel parameters such as transmission power and fading and additive noise statistics in relation to the unstable eigenvalues of the open-loop system matrix. The corresponding mean square capacity of the power constrained fading channel under causal encoders/decoders is given. It is proved that this mean square capacity is smaller than the corresponding Shannon channel capacity. In the end, numerical examples are presented, which demonstrate that the causal encoders/decoders render less restrictive stabilizability conditions than those under linear encoders/decoders studied in the existing works.
\end{abstract}

\section{Introduction}

Control over communication networks has been a hot research topic in the past decade \cite{Zaidi2014}. This is mainly motivated by the rapid development of wireless communication technology that enables the connection of geographically distributed systems and devices. However, the insertion of wireless communication networks also poses challenges in analysis and design of control systems due to constraints and uncertainties in communications. One must take the communication networks into consideration and analyze how they affect the stability and performance of the closed-loop control systems.

Until now, there have been plentiful results that reveal requirements on communication channels to ensure the stabilizability. For noiseless digital channels, the celebrated data rate theorem is given in \cite{Nair2004SIAM}. For noisy channels, the problem is complicated by the fact that different channel capacities are required under different stability definitions. For almost sure stability, \cite{Matveev2007} shows that the Shannon capacity in relation to unstable dynamics of a system constitutes the critical condition for its stabilizability. While for moment stability, \cite{Sahai2006} shows that the Shannon capacity is too optimistic while the zero-error capacity is too pessimistic, and the anytime capacity introduced in this paper characterizes the stabilizability conditions. Essentially, to keep the $\eta$-moment of the state of an unstable scalar plant bounded, it is necessary and sufficient for the feedback channel's anytime capacity corresponding to anytime-reliability $\alpha=\eta \mathrm{log}_2|\lambda|$ to be greater than $\mathrm{log}_2 |\lambda|$, where $\lambda$ is the unstable eigenvalue of the plant.  The anytime capacity has a more stringent reliability requirement than  the Shannon capacity. However, it is worthy noting that there exist no systematic method to calculate the anytime capacities of channels.

In control community, the anytime capacity is usually studied under the mean square stability requirement, for which the anytime capacity is commonly named as  the mean square capacity. For example, \cite{Elia2005}  characterizes  the  mean square capacity   of a fading channel.   \cite{Braslavsky2007} studies the  mean square  stabilization  problem over  a power constrained  AWGN channel and characterizes  the critical capacity to  ensure mean square stabilizability.  They   further  show   that  the  extension   from  linear encoders/decoders to more general  causal encoders/decoders cannot provide additional  benefits of  increasing   the  channel   capacity \cite{Freudenberg2010}.
Specifically, the results stated above deal with fading channels or AWGN channels separately. While in wireless communications, it is practical to consider them as a whole.  In this paper, we are interested in a power constrained fading channel which is corrupted by both fading and AWGN. We aim to find the critical condition on the channel to ensure the mean square stabilizability of the system. Note that \cite{Xiao2011} has derived the necessary and sufficient condition for such kind of channel to ensure mean square stabilizability under a linear encoder/decoder. It is still unknown whether we can achieve a higher channel capacity with more general causal strategies. This paper provides a positive answer to this question.

This paper is organized as follows. Problem formulation and some preliminaries  are given in Section 2. Section 3 provides the results for scalar systems. Section 4 discusses the extension to vector systems.  Section 5 provides numerical illustrations and this paper ends with some concluding remarks in Section 6.

\section{Problem Formulation and Preliminaries}

This paper studies the following single-input discrete-time linear system
\begin{equation}
\label{LTIDynamics}
  x_{t+1}=A x_{t}+B u_{t}
\end{equation}
where $x\in \mathbb{R}^n$ is the system state and $u \in \mathbb{R}$ is the control input. Without loss of generality, we assume that all the eigenvalues of $A$ are unstable, i.e.,  $|\lambda_i(A)|\ge 1 $  for all $i=1,2,\ldots, n$ \cite{Freudenberg2010}. The initial value $x_0$ is randomly generated from a Gaussian distribution with zero mean and  bounded covariance ${ \Sigma_{x_0}}$. The system state $x_t$ is observed by a sensor and then encoded and transmitted to the controller through a power constrained fading channel. The communication channel is modeled as
\begin{equation}
\label{channel1}
  r_t=g_ts_t+n_t
\end{equation}
in which $s_t$ denotes the channel input; $r_t$ represents the channel output; $\{g_t\}$ is an i.i.d. stochastic process modeling the fading effects and $\{n_t\}$ is the additive white Gaussian noise with zero-mean  and known variance $\sigma_n^2$. The channel input $s_t$ must satisfy an average power constraint, i.e., $\E\{s_t^2\}\le P$.  We also assume that $x_0, g_0, n_0, g_1, n_1, \ldots$  are independent.  In the paper, it is assumed that after each transmission, the instantaneous value of the fading factor $g_t$ is known to the decoder, which is a reasonable assumption for slowly varying channels with channel estimation \cite{Goldsmith1997}.
The instantaneous Shannon  channel capacity is  $c_t=\frac{1}{2}\mathrm{ln}\big( 1+\frac{g_t^2P}{\sigma_n^2} \big)$ with $c_t$ being measured in nats/transmission. The feedback configuration among the plant, the sensor and the controller, and the channel encoder/decoder structure are depicted in  Fig. 1.
\begin{figure}[htpb]
\centering
  \includegraphics[width=0.4\textwidth]{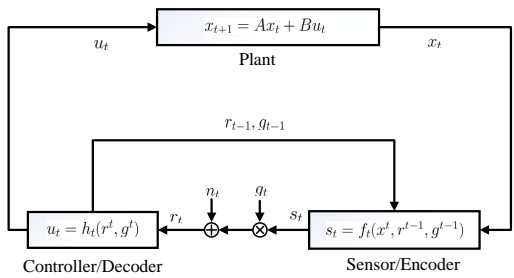}\\
\caption{Network control structure over power constraint fading channel}
\end{figure}
In this paper, we  try to find requirements on the power constrained fading channel such that there exists a pair of causal encoder/decoder $\{f_t\}, \{h_t\}$ that can mean square stabilize the LTI dynamics \eqref{LTIDynamics}, i.e., to render $\mathrm{lim}_{t \rightarrow \infty} \E\{x_tx_t'\}=0$.

To solve this problem, the following preliminaries are needed, which are borrowed from \cite{Freudenberg2010}. Throughout the paper, a sequence $\{\chi_i\}_{i=0}^t$ is denoted by $\chi^t$; random variables are denoted by uppercase letters, and their realizations by lower case letters. All random variables are assumed to exist on a common probability space with measure $\mathcal{P}$. The probability density of a random variable $X$ in Euclidean space with respect to Lebesgue measure on the space is denoted by $p_X$, and the probability density of $X$ conditioned on the $\sigma$-field generated by the event $Y=y$ by $p_{X|y}$. Let the expectation operator be denoted by $\E$, and the expectation conditioned on the event $Y=y$ by $\E_{y}$. We use $\mathrm{log}$ to denote the logarithm to the base two, and $\mathrm{ln}$ to denote the natural logarithm.

The differential  entropy of $X$ is  defined by $H(X)=-\E\{\mathrm{ln} p_X  \}$,  provided  that  the defining  integral  exists.  Denote  the conditional   entropy    of   $X$    given   the   event    $Y=y$   by $H_y(X)=H(X|Y=y)=-\E_y\{  \mathrm{ln}  p_{X|y}  \}$,  and  the  random variable associated with $H_y(X)$ by $H_Y(X)$. The average conditional entropy of $X$ given the event $Y=y$  and averaged over $Y$ is defined by $H(X|Y)=\E\{H_Y(X) \}$,  and the average conditional entropy  of $X$ given the  events   $Y=y$  and   $Z=z$  and  averaged   only  over   $Y$  by $H_z(X|Y)=\E_{z}\{H_{Y,Z}(X)\}$.    The conditional mutual information between two random variables $X$ and $Y$ given the event $Z=z$ is defined by $I_z(X;Y)=H_z(X)-H_z(X|Y)$. Given  a   random  variable   $X\in \mathbb{R}^{n}$  with entropy  $H(X)$,  the entropy  power  of $X$  is defined  by $N(X)=\frac{1}{2\pi  e}  e^{\frac{2}{n}H(X)}$. Denote  the conditional  entropy   power  of   $X$  given   the  event   $Y=y$  by $N_y(X)=\frac{1}{2\pi   e}e^{\frac{2}{n}H_y(X)}$,   and   the   random variable associated with $N_y(X)$ by $N_Y(X)$. The average conditional entropy power  of $X$ given the  event $Y=y$ and averaged  over $Y$ is defined by $N(X|Y)=\E\{N_Y(X)\}$, and  the average conditional entropy power of $X$  given the events $Y=y$ and $Z=z$  and averaged only over $Y$ by $N_z(X|Y)=\E_z \{N_{Y,Z}(X)\}$.  The following lemma shows that the entropy power of a random  variable provides an estimation of the lower bound for its variance.

\begin{lemma}[\cite{Freudenberg2010}]
\label{lemma:varianceIsBoundedByEntropyPower}
Let $X$ be an $n$-dimensional random variable. Then $N_y(X)\le \frac{1}{n} \E_y\{\|X\|^2\}$.
\end{lemma}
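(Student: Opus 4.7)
The plan is to chain together three classical inequalities, each of which turns one of the three objects (entropy, determinant, trace) into the next.

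First, I would invoke the maximum entropy principle conditioned on the event $Y=y$: among all $n$-dimensional random variables with a prescribed covariance matrix, the Gaussian achieves the largest differential entropy. Let $K_y$ denote the conditional covariance matrix of $X$ given $Y=y$. Then
\begin{equation*}
H_y(X) \le \tfrac{1}{2}\ln\bigl((2\pi e)^n \det K_y\bigr).
\end{equation*}
Substituting this bound into the definition $N_y(X)=\frac{1}{2\pi e}e^{\frac{2}{n}H_y(X)}$ yields the clean intermediate inequality $N_y(X)\le (\det K_y)^{1/n}$.

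Second, I would apply the AM--GM inequality to the eigenvalues $\lambda_1,\dots,\lambda_n$ of $K_y$, which are nonnegative because $K_y$ is positive semidefinite. This gives
\begin{equation*}
(\det K_y)^{1/n} = \Bigl(\prod_{i=1}^n \lambda_i\Bigr)^{1/n} \le \frac{1}{n}\sum_{i=1}^n \lambda_i = \tfrac{1}{n}\,\mathrm{tr}(K_y).
\end{equation*}

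Third, I would observe that the conditional covariance is a centered second moment, so
\begin{equation*}
\mathrm{tr}(K_y) = \mathbb{E}_y\bigl\{\|X-\mathbb{E}_y X\|^2\bigr\} \le \mathbb{E}_y\bigl\{\|X\|^2\bigr\},
\end{equation*}
the inequality being the elementary fact that centering minimizes the second moment. Concatenating the three inequalities yields $N_y(X)\le \frac{1}{n}\mathbb{E}_y\{\|X\|^2\}$, as claimed.

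There is no real obstacle here; the only care needed is to make sure that each step is legitimately applied to the conditional law $p_{X|y}$ rather than to the marginal of $X$, which is fine because the maximum-entropy bound, the AM--GM step, and the centering bound are each purely distributional statements and apply verbatim to the conditional distribution. Degenerate cases (e.g., $\det K_y=0$) are harmless because both sides of the intermediate inequality then collapse consistently, and the case $H_y(X)=-\infty$ only strengthens the bound.
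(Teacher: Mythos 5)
The paper states this lemma as a citation to \cite{Freudenberg2010} and gives no proof of its own, but your argument is the standard one used there: Gaussian maximum entropy for a fixed conditional covariance, AM--GM on the eigenvalues, and the fact that centering minimizes the second moment. Each step is correctly applied to the conditional law, the degenerate cases are handled appropriately, and the proof is complete.
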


\begin{lemma}
\label{lemma:mutualInformationEqual}
 Let $X$ be an $n$-dimensional random variable, $f(X)$ be a function of $X$, and $Y=f(X)+N$ with $N$ being a random variable that is independent with $X$. Then  $I(X;Y)=I(f(X);Y)$.
\end{lemma}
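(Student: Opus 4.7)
The plan is to show $I(X;Y)=I(f(X);Y)$ by sandwiching the two quantities via the data processing inequality, exploiting the Markov chain $X \to f(X) \to Y$ induced by the structure $Y=f(X)+N$ with $N$ independent of $X$.

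First I would verify that $X \to f(X) \to Y$ is indeed a Markov chain. Conditioned on $f(X)=a$, the output $Y$ equals $a+N$, and since $N$ is independent of $X$, the conditional distribution of $Y$ given $(X,f(X))$ depends only on $f(X)$. This immediately gives $I(X;Y \mid f(X))=0$. In the other direction, since $f(X)$ is a deterministic function of $X$, we have $H(f(X)\mid X)=0$, so $I(f(X);Y\mid X)=0$ as well.

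Next I would apply the chain rule of mutual information to the joint variable $(X,f(X))$ in two ways:
\begin{equation*}
I(X,f(X);Y)=I(X;Y)+I(f(X);Y\mid X)=I(f(X);Y)+I(X;Y\mid f(X)).
\end{equation*}
Substituting the two vanishing conditional mutual informations from the previous step yields $I(X;Y)=I(f(X);Y)$, which is the desired identity.

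I do not expect a serious obstacle here; the argument is a textbook consequence of the chain rule together with the two structural facts (determinism of $f$ and independence of $N$ from $X$). The only mild care needed is to ensure that all the relevant densities and entropies exist so that the chain-rule manipulations are legitimate, but for the Gaussian and absolutely continuous setting used throughout the paper this is automatic. An alternative, essentially equivalent route would be to write $I(X;Y)=H(Y)-H(Y\mid X)$ and $I(f(X);Y)=H(Y)-H(Y\mid f(X))$ and show directly that $H(Y\mid X)=H(Y\mid f(X))=H(N)$, since in both conditionings $Y$ is a shift of $N$ by a known constant; I would mention this as a sanity check.
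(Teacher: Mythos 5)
Your proof is correct and rests on the same two structural facts as the paper's own argument: the Markov chain $X\to f(X)\to Y$ (your $I(X;Y\mid f(X))=0$, which the paper invokes as the data processing inequality) and the determinism of $f(X)$ given $X$ (your $I(f(X);Y\mid X)=0$, which the paper uses in the form $H(Y\mid X)=H(Y\mid X,f(X))\le H(Y\mid f(X))$). You package these as a single chain-rule identity with two vanishing conditional terms rather than as two one-sided inequalities, but the argument is essentially the same.
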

\begin{proof}
Since $H(Y|X)=H(Y|X,f(X))\le H(Y|f(X))$, we have $H(Y)=I(X;Y)+H(Y|X)\le I(X; Y)+H(Y|f(X))$. Thus $ H(Y)-H(Y|f(X)) = I(Y;f(X))\le I(X;Y)$.
Besides, noting that $X\rightarrow f(X)\rightarrow Y$ forms a Markov chain, the data processing inequality \cite{Cover2006} implies that $I(X;Y) \le I(f(X); Y)$. Combining the two facts, we have  $I(X;Y)=I(f(X);Y)$.
\end{proof}
\begin{remark}
  Lemma \ref{lemma:mutualInformationEqual}  indicates that for the AWGN channel,  the amount of information that the channel output contains about the source is equal to the amount of information that the channel output contains about the channel input.
\end{remark}

\section{Scalar Systems}
To better convey our ideas, we start with scalar systems. Consider the following scalar system
\begin{equation}
\label{scalarDynamics}
  x_{t+1}=\lambda x_t+u_t
\end{equation}
where $|\lambda|\ge 1$ and $\E\{x_0^2\}=\sigma_{x_0}^2$.
With the communication channel given in \eqref{channel1}, the stabilizability result is stated in the following theorem.
\begin{theorem}
\label{theorem:theorem1}
There exists a causal encoder/decoder pair $\{f_t\}, \{h_t\}$, such that the system \eqref{scalarDynamics} can be stabilized over the communication channel \eqref{channel1} in mean square sense if and only if
\begin{equation}
\label{iffConditionForScalarSystem}
 \mathrm{log} |\lambda| < - \frac{1}{2} \mathrm{log}  \E\{ \frac{\sigma_n^2}{\sigma_n^2+g_t^2P} \}
  \end{equation}
\end{theorem}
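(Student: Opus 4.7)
The plan is to tackle necessity and sufficiency separately; sufficiency is constructive and necessity is information-theoretic. For sufficiency I would exhibit an explicit linear pair: the decoder maintains a one-step predictor $\hat{x}_{t|t-1}$; the sensor (which can simulate the decoder) transmits $s_t=\alpha_t(x_t-\hat{x}_{t|t-1})$ with $\alpha_t^2\,\E\{(x_t-\hat{x}_{t|t-1})^2\}=P$ so the power constraint is met with equality; the decoder forms $\hat{x}_{t|t}$ from $(r_t,g_t)$ via the scalar Gaussian MMSE update; and the controller applies $u_t=-\lambda\hat{x}_{t|t}$. Because every block is linear and $x_0$ is Gaussian, the conditional distribution of $x_t$ given the past stays Gaussian, and a direct computation yields
\begin{equation*}
\E\{(x_{t+1}-\hat{x}_{t+1|t})^2\}=\lambda^2\,\E\Big\{\tfrac{\sigma_n^2}{\sigma_n^2+g_t^2P}\Big\}\,\E\{(x_t-\hat{x}_{t|t-1})^2\}.
\end{equation*}
Condition \eqref{iffConditionForScalarSystem} makes the contraction factor strictly less than one, which drives $\E\{x_t^2\}$ to zero.

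For necessity, I would suppose some causal $\{f_t\},\{h_t\}$ achieves $\E\{x_t^2\}\to 0$ and deduce \eqref{iffConditionForScalarSystem} by tracking $\eta_t:=N(x_t\mid r^{t-1},g^{t-1})$. Shift invariance and the scaling law for differential entropy, together with the fact that $u_t$ is $(r^t,g^t)$-measurable, give $N(x_{t+1}\mid r^t,g^t)=\lambda^2\,N(x_t\mid r^t,g^t)$. The heart of the argument is the one-step channel bound
\begin{equation*}
N(x_t\mid r^t,g^t)\ \ge\ \eta_t\cdot\E\Big\{\tfrac{\sigma_n^2}{\sigma_n^2+g_t^2P}\Big\}.
\end{equation*}
To derive it I would condition on $(r^{t-1},g^t)$ so that the channel collapses to a scalar AWGN with deterministic gain $g_t$; apply Lemma~\ref{lemma:mutualInformationEqual} to replace $I(x_t;r_t\mid r^{t-1},g^t)$ by $I(s_t;r_t\mid r^{t-1},g^t)$; bound the latter by $\tfrac{1}{2}\mathrm{log}(1+g_t^2\,\E\{s_t^2\mid r^{t-1},g^t\}/\sigma_n^2)$; convert the entropy identity $H(x_t\mid r^{t-1},g^{t-1})-H(x_t\mid r^t,g^t)=I(x_t;r_t\mid r^{t-1},g^t)$ into an entropy-power bound; and apply Jensen's inequality to the convex map $p\mapsto\sigma_n^2/(\sigma_n^2+g_t^2p)$ to relax the conditional power to the overall constraint $\E\{s_t^2\}\le P$, using independence of $g_t$ from $(r^{t-1},g^{t-1})$ to let the $g_t$-average factor out.

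Combining the dynamics step with the one-step bound gives $\eta_{t+1}\ge\lambda^2\,\E\{\sigma_n^2/(\sigma_n^2+g_t^2P)\}\,\eta_t$, and Lemma~\ref{lemma:varianceIsBoundedByEntropyPower} delivers $\eta_t\le\E\{x_t^2\}$. Mean square stability forces $\eta_t\to 0$, which is only compatible with the recursion when the contraction factor is strictly less than one, namely when \eqref{iffConditionForScalarSystem} holds. The main obstacle I expect is precisely the entropy-power/Jensen maneuver: the conditional input power $\E\{s_t^2\mid r^{t-1},g^t\}$ is coupled to $\eta_t$ through the encoder, and Jensen must be applied in the right order---first freezing $g_t$ so that only the past average is exchanged with the convex function---to recover $\E\{\sigma_n^2/(\sigma_n^2+g_t^2P)\}$ instead of the weaker $\sigma_n^2/(\sigma_n^2+\E\{g_t^2\}P)$; this convexity gap is exactly what makes the mean square capacity strictly smaller than the averaged Shannon capacity and is the novel content beyond the AWGN case of \cite{Freudenberg2010}.
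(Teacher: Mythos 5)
Your proposal is correct and follows essentially the same route as the paper: the necessity argument is the identical entropy-power bookkeeping (condition on $(r^{t-1},g^t)$, use Lemma~\ref{lemma:mutualInformationEqual} and the Gaussian channel capacity to bound the per-step information, convert to entropy power, and average — with your Jensen step on $p\mapsto\sigma_n^2/(\sigma_n^2+g_t^2p)$ being a slightly more careful treatment of the conditional input power than the paper's direct appeal to $I(S_t;R_t|\cdot)\le c_t$ — then combine with the $\lambda^2$ dynamics step and Lemma~\ref{lemma:varianceIsBoundedByEntropyPower}). Your sufficiency construction is the same Schalkwijk-type feedback scheme, merely phrased as forwarding the current-state innovation with a one-step predictor and dead-beat control instead of the paper's iterative refinement of the estimate of $x_0$ via Lemma~\ref{lemma:estimationThenControl}; both rely on encoder-side knowledge of the decoder's state (channel-output feedback) and yield the identical second-moment recursion with contraction factor $\lambda^2\,\E\{\sigma_n^2/(\sigma_n^2+g_t^2P)\}$.
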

Theorem \ref{theorem:theorem1} indicates that the mean square capacity of the power constraint fading channel is $C_{\mathrm{MSC}}=-\frac{1}{2} \mathrm{log} {\E\{ \frac{\sigma_n^2}{\sigma_n^2+g_t^2P} \}}$. In the following, we will prove the necessity and sufficiency of Theorem \ref{theorem:theorem1}, respectively. The proof essentially follows the same steps as in \cite{Minero2009, Freudenberg2010, Kumar2014}, however, with some differences due to the channel structure.

\subsection{Proof of Necessity}
The proof of necessity follows from the intuition below.  In view of Lemma \ref{lemma:varianceIsBoundedByEntropyPower}, the entropy power provides a lower bound for the mean square value of the system state. We thus can use the average entropy power as a measure of the uncertain region of the system state and analyze its update. At time $t$, the controller maintains a knowledge of the uncertain region of $x_t$. When it takes action on the plant, the average uncertain region  of $x_{t+1}$ predicated by the controller is expanded to $\lambda^2$ times that of $x_t$. This is the iteration we term as dynamics update, which describes the update of the uncertain region of $x$ maintained by the controller from time $t$ to $t+1$.  After receiving  information about $x_{t+1}$ from the sensor through the communication channel,  the controller can reduce the predication error of the uncertain region of  $x_{t+1}$ by a factor of $\E\{\frac{\sigma_n^2}{\sigma_n^2+g_t^2P}\}$. This is the iteration we term as communication update, which describes the update of the uncertain region of $x$ maintained by the controller at time $t+1$ after it has received the information about $x_{t+1}$ from the sensor through the communication channel.   Thus to ensure mean square stability, the average expanding factor $\lambda^2 \E\{\frac{\sigma_n^2}{\sigma_n^2+g_t^2P} \}$ of the system state's uncertain region should be smaller than one, which gives the necessary requirement in Theorem \ref{theorem:theorem1}. The formal proof is stated as follows. Here we use the uppercase letters $X, S, R, G$ to denote the random variables of the system state, the channel input, the channel output and the channel fading coefficient. We use the lowercase letters $x, s, r, g$ to denote their realizations.
      \subsubsection{Communication Update}
     The average  entropy power of $X_t$ conditioned on  $(R^t,G^t)$ is
$\scriptstyle N(X_t|R^t,G^t)=  \E\{ N_{R^t,G^t}(X_t) \} \overset{(a)}{=} \E \{ \E \{N_{R^t,G^t}(X_t) | R^{t-1}, G^t \} \}
         \overset{(b)}{=}\frac{1}{2\pi e} \E \{ \E \{ e^{ {2} H_{R^t,G^t}(X_t)} | R^{t-1}, G^t  \}\} $
    where $(a)$ follows from the law of total expectation and $(b)$ follows from the definition of entropy power.
  Since
   $
   \begin{aligned}
   &        \E \{ e^{2 H_{R^t,G^t} (X_t) }   | R^{t-1}= r^{t-1}, G^t= g^t \} \\
     &       \overset{(c)}{\ge} e^{ 2 \E  \{ H_{R^t,G^t} (X_t) | R^{t-1}= r^{t-1}, G^t= g^t   \} }\\
     &\overset{(d)}{=} e^{2  H(X_t| R_t,  R^{t-1}= r^{t-1}, G^t= g^t) }\\
     &\overset{(e)}{=} e^{2 \left( H(X_t| R^{t-1}= r^{t-1}, G^t= g^t) - I(X_t, R_t| R^{t-1}= r^{t-1}, G^t= g^t) \right)}\\
     &\overset{(f)}{=} e^{2  \left( H(X_t| R^{t-1}= r^{t-1}, G^t= g^t) - I(S_t, R_t| R^{t-1}= r^{t-1}, G^t= g^t) \right) }\\
    & \overset{(g)}{\ge} e^{2  \left( H(X_t | R^{t-1}= r^{t-1}, G^t= g^t) - c_t \right) }\\
     & \overset{(h)}{=} e^{- 2 c_t} e^{2 H(X_t|R^{t-1}=r^{t-1},G^{t-1}=g^{t-1})}
    \end{aligned}
     $
    where  $(c)$ follows from Jensen's inequality; $(d)$ follows from the definition of conditional entropy;  $(e)$ follows from the definition of conditional  mutual information; $(f)$ follows from Lemma \ref{lemma:mutualInformationEqual}; $(g)$ follows from the definition of channel capacity, i.e.,  $I(S_t, R_t|R^{t-1}=r^{t-1}, G^t=g^t )\le c_t$ and  $(h)$ follows from the fact that $G_t$ is independent with $X_t$,  we have
    $\scriptstyle
            N(X_t|R^t,G^t)
         \ge\frac{1}{2\pi e} \E \{  e^{-2 C_t} e^{2 H_{R^{t-1},G^{t-1}}(X_t)} \}
         =\E \{  \frac{\sigma_n^2}{\sigma_n^2+g_t^2P} \}  N(X_t|R^{t-1},G^{t-1})
       $.

    \subsubsection{Dynamics Update}
    Since
$e^{2H(X_{t+1}|R^t=r^t,G^t=g^t)}  = e^{2 H(\lambda X_t+U_t|R^t=r^t,G^t=g^t)} \overset{(i)}{=} e^{2 H (\lambda X_t|R^t=r^t,G^t=g^t)}
\overset{(j)}{=} e^{2H(X_t|R^t=r^t,G^t=g^t)+2\ln|\lambda|}
 = \lambda^2  e^{2 H(X_t|R^t=r^t,G^t=g^t)}
$
where $(i)$ follows from the fact that $u_t=h_t(r^t, g^t)$ and $(j)$ follows from
 Theorem 8.6.4 in \cite{Cover2006}, we have
    $ \scriptstyle N(X_{t+1}|R^t,G^t)   \ge \E \left\{\frac{1}{2\pi e} \lambda^2  e^{2 H_{R^t,G^t}(X_t)}  \right\}    = \lambda^2 N(X_t|R^t,G^t)  $.
    \subsubsection{Proof of Necessity}
    Combining the results of communication update and dynamics update, we have
$   \scriptstyle  N(X_{t+1}|R^t,G^t)  \ge   \lambda^2 \E \{ \frac{\sigma_n^2}{\sigma_n^2+g_t^2P} \} N(X_t|R^{t-1}, G^{t-1})$.
    In view of Lemma \ref{lemma:varianceIsBoundedByEntropyPower}, $N(X_{t+1}|R^t,G^t)$ should converge to zero asymptotically. Thus $\lambda^2 \E \{ \frac{\sigma_n^2}{\sigma_n^2+g_t^2P} \} <1$, which is  \eqref{iffConditionForScalarSystem}  and this proves the necessity.

\subsection{Proof of Sufficiency}
To prove the sufficiency, we need to construct a pair of encoder and decoder. The encoder and decoder are designed following an "estimation then control" strategy. The controller consecutively  estimates the initial state $x_0$ by using the received information from the channel and then applies an equivalent control to the plant. The reason for adopting such strategy is explained as follows. The response of the linear system is $x_t=\lambda^t (x_0-\hat{x}_t)$ with $\hat{x}_t=-\sum_{i=0}^{t-1} \lambda^{-1-i} u_i$, which means $\E\{x_t^2\}=\lambda^{2t} \E\{(x_0-\hat{x}_t)^2\}$. We can treat $\hat{x}_t$ as an estimate of the controller for the initial state $x_0$. If the estimation error $\E\{(x_0-\hat{x}_t)^2\}$ converges to zero at a speed that is greater than $\lambda^2$, i.e., there exists $\eta>\lambda^2$ and $\alpha>0$, such that $\E\{(x_0-\hat{x}_t)^2\}\le \frac{\alpha}{\eta^t}$, the mean square value of the system state would be bounded by
$
  \E\{x_t^2\}\le \alpha \left(\frac{\lambda^2}{\eta}\right)^{t}
$.
Thus $\underset{t\rightarrow \infty}{\mathrm{lim}}\E\{x_t^2\}=0$, i.e.,  system \eqref{scalarDynamics} is mean square stable. This intuition can be formalized using the following lemma.
\begin{lemma}[\cite{Kumar2014}]
\label{lemma:estimationThenControl}
If there exists an estimation scheme $\hat{x}_t$ for the initial system state $x_0$, such that the estimation error $e_t=\hat{x}_t-x_0$ satisfies the following property,
\begin{eqnarray}
\label{eq:estimationThenControlRequirement}
\E\{ e_t \}=0 \label{eq:estimationThenControlRequirement1} \\
\lim_{t\rightarrow \infty} A^t \E\{ e_te_t' \} (A')^t=0  \label{eq:estimationThenControlRequirement2}
\end{eqnarray}
then the system \eqref{LTIDynamics} can be mean square stabilized by the  controller
$   u_t=K\left( A^t \hat{x}_t+ \sum_{i=1}^t A^{t-i} Bu_{i-1} \right) $
with $K$ being selected such that $A+BK$ is stable.
\end{lemma}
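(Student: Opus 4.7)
The plan is to recast the prescribed controller as certainty-equivalence feedback on the controller's plug-in estimate of the current state, and then analyze the resulting linear recursion driven by the propagated initial-state estimation error. First I would introduce
$\tilde{x}_t := A^t \hat{x}_t + \sum_{i=1}^{t} A^{t-i} B u_{i-1}$,
which is what the controller would believe the state to be if its estimate $\hat{x}_t$ of $x_0$ were exact. Using the closed-form response $x_t = A^t x_0 + \sum_{i=1}^{t} A^{t-i} B u_{i-1}$, one immediately gets $\tilde{x}_t - x_t = A^t e_t$. The prescribed control is exactly $u_t = K\tilde{x}_t$, so substituting it into \eqref{LTIDynamics} yields the closed-loop recursion
\begin{equation*}
x_{t+1} = (A+BK)\, x_t + BK\, A^t e_t .
\end{equation*}
Unrolling from $x_0$ and writing $\tilde{A} := A+BK$, I obtain $x_t = \tilde{A}^t x_0 + \sum_{i=0}^{t-1} \tilde{A}^{t-1-i} BK\,(A^i e_i)$, so the dynamics decomposes into a stable autonomous piece plus a convolution of a geometrically-decaying kernel with the propagated-error sequence $\{A^i e_i\}$.

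The second step is to bound $\E \|x_t\|^2$. Since $\tilde{A}$ is Schur by the choice of $K$, pick $c>0$ and $\rho\in(0,1)$ with $\|\tilde{A}^k\|\le c\rho^k$; the homogeneous term is then dominated by $c^2 \rho^{2t}\, \E \|x_0\|^2$, which vanishes exponentially. For the driven part I would apply the triangle inequality and then Cauchy--Schwarz, redistributing a weight $\rho^{(t-1-i)/2}$ into each summand, to obtain
\begin{equation*}
\E \Big\|\sum_{i=0}^{t-1}\tilde{A}^{t-1-i} BK\, A^i e_i\Big\|^2 \le \frac{c^2\|BK\|^2}{1-\rho}\sum_{i=0}^{t-1}\rho^{t-1-i}\, \E \|A^i e_i\|^2.
\end{equation*}
By assumption \eqref{eq:estimationThenControlRequirement2}, $\E \|A^i e_i\|^2 = \mathrm{tr}\bigl(A^i\E\{e_i e_i'\}(A')^i\bigr)\to 0$; splitting the above sum at an index $N$ above which the summand falls below a chosen $\varepsilon$ shows the geometrically-weighted tail is at most $\varepsilon/(1-\rho)$ while the head vanishes as $t\to\infty$. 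Therefore $\E \|x_t\|^2\to 0$, and since $\E\{x_t x_t'\}$ is positive semidefinite with vanishing trace, $\E\{x_t x_t'\}\to 0$.

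The main technical point I expect to dwell on is the Cauchy--Schwarz reweighting: a naive bound $\|\sum v_i\|^2\le t\sum\|v_i\|^2$ would introduce a factor of $t$ that swamps the exponential kernel, whereas redistributing $\rho^{(t-1-i)/2}$ into each summand turns the right-hand side into a proper convolution whose limit is controlled by the classical fact that the convolution of a summable kernel with a null sequence is null. Assumption \eqref{eq:estimationThenControlRequirement1} plays only a supporting role in the outline above, but becomes essential in an alternative route that expands $\E\{x_t x_t'\}$ block-by-block, since unbiasedness of $e_t$ ensures that the cross-moments $\E\{e_i e_j'\}$ behave as genuine covariances rather than shifted second-moment matrices that could pick up persistent bias terms under the $A^t$ propagation.
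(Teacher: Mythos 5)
Your proof is correct. The paper does not prove this lemma itself (it is cited from [Kumar2014]); it only sketches the scalar intuition $x_t=\lambda^t(x_0-\hat{x}_t)$, and your argument is the natural vector-case formalization of exactly that idea: the identity $\tilde{x}_t-x_t=A^te_t$, the closed-loop recursion $x_{t+1}=(A+BK)x_t+BKA^te_t$, and the weighted Cauchy--Schwarz bound showing that a geometrically decaying kernel convolved with the null sequence $\E\|A^ie_i\|^2$ vanishes are all sound, so your write-up supplies a complete proof of what the paper leaves to the reference.
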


 When $g_t$ is known at the receiver, channel \eqref{channel1} resembles an AWGN channel. Shannon shows that  when estimating a Gaussian random variable through an AWGN channel, the minimal mean square estimation error can be attained by using linear encoders and decoders, respectively \cite{Gattami2014}. And the minimal mean square error variance is given by $\frac{P\sigma_n^2}{\sigma_n^2+g_t^2P}$.  Thus through one channel use, we can at best decrease the estimation error by a factor of $\frac{\sigma_n^2}{\sigma_n^2+g_t^2P}$. Since ${g_t}$ is i.i.d., we can transmit the estimation error from the decoder to the encoder and iteratively conduct the minimal mean square estimation process. Then the estimation error would decrease on average at a speed of $\E \{ \frac{\sigma_n^2}{\sigma_n^2+g_t^2P}\}$. If $\lambda^2\E\{\frac{\sigma_n^2}{\sigma_n^2+g_t^2P}\}<1$, in view of Lemma \ref{lemma:estimationThenControl}, system \eqref{scalarDynamics} can be mean square stabilized.  The estimation strategy actually follows the principle of the well-known scheme of Schalkwijk \cite{Schalkwijk1966}, which  utilizes the noiseless feedback link to consecutively refine the estimation error. The detailed encoder/decoder design and stability analysis are given as follows.

    \subsubsection{Encoder/Decoder Design}
Suppose the estimation of $x_0$ formed by the decoder is $\hat{x}_t$ at time $t$  and the estimation error is $e_t=\hat{x}_t-x_0$. The encoder is designed as
\begin{equation}
\label{encoder}
  \begin{aligned}
     s_0  &=\sqrt{\frac{P}{\sigma_{x_0}^2}} x_0\\
      s_t&=\sqrt{\frac{P}{\sigma^2_{e_{t-1}}}}\left( \hat{x}_{t-1} - x_0 \right), \;\; t\ge 1\\
  \end{aligned}
\end{equation}
The decoder is designed as
\begin{equation}
\label{decoder}
  \begin{aligned}
    \hat{x}_0 & =\sqrt{\frac{\sigma_{x_0}^2}{P}}r_0\\
     \hat{x}_t&=\hat{x}_{t-1}-\frac{\E\{r_t e_{t-1}|g_t\}}{\E\{r_t^2|g_t\}}r_t, \;\; t\ge 1
  \end{aligned}
\end{equation}
with $\sigma^2_{e_{t-1}}$ representing the variance of $e_{t-1}$.

    \subsubsection{Proof of Sufficiency}
Since $r_0=g_0s_0+n_0$, in view of \eqref{encoder} and \eqref{decoder}, we have $  e_0 =(g_0-1)x_0+ \sqrt{\frac{\sigma_{x_0}^2}{P}}n_0
$.  Because $g_0$, $x_0$, $n_0$ are independent and $x_0$, $n_0$ follows a zero mean Gaussian distribution, we know that the conditional probability distribution of $e_0$ given the event  $g_0$ is Gaussian and $  \E\{e_0|g_0\}  =0$,   $\E\{e_0^2|g_0\}  = (g_0-1)^2 \sigma_{x_0}^2+ \frac{\sigma_{x_0}^2\sigma_n^2}{P}$. Thus $\E\{e_0\}=\E \{\E\{e_0|g_0\} \} =0$ and  $ \E\{e^2_0\} =\E\{ \E\{e_0^2|g_0\} \} = \E \{(g_0-1)^2\} \sigma_{x_0}^2+ \frac{\sigma_{x_0}^2\sigma_n^2}{P}$.

For $t\ge 1$, in view of \eqref{encoder} and  \eqref{decoder}, we have
\begin{multline*}
e_t=e_{t-1}-\frac{\E\{r_te_{t-1}|g_t\}}{\E\{r_t^2|g_t\}}r_t\\
    =\Big(1-g_t \sqrt{\frac{P}{\sigma_{e_{t-1}}^2}} \frac{\E\{r_te_{t-1}|g_t\}}{\E\{r_t^2|g_t\}} \Big) e_{t-1} -\frac{\E\{ r_t e_{t-1}|g_t \}}{\E\{r_t^2|g_t\}} n_t
    \end{multline*}
Thus the conditional probability distribution for $e_t$ given the event $g_t$ is Gaussian.
We also have
\begin{equation*}
\begin{aligned}
& \E\{e_t\}  =  \E\{ \E\{ e_t|g_t \} \}\\
&=  \E\Big\{ \Big(1-g_t \sqrt{\frac{P}{\sigma_{e_{t-1}}^2}} \frac{\E\{r_te_{t-1}|g_t\}}{\E\{r_t^2|g_t\}} \Big) \E\{e_{t-1}|g_t\} \Big\} \\
&\overset{(a)}{=} \E \Big\{ \Big(1-g_t \sqrt{\frac{P}{\sigma_{e_{t-1}}^2}} \frac{\E\{r_te_{t-1}|g_t\}}{\E\{r_t^2|g_t\}} \Big) \Big\} \E\{e_{t-1}\}
\end{aligned}
\end{equation*}
    where $(a)$ follows from the fact that $g_t$ is independent with $e_{t-1}$. Since $\E\{e_0\}=0$, we further know that  $\E\{e_t\}\equiv 0$. The sufficient condition \eqref{eq:estimationThenControlRequirement1} is satisfied.

Since $e_{t-1}$, $g_t$ and $n_t$ are  independent, we have $\E\{e_{t-1}^2|g_t\}=\E\{e_{t-1}^2\}$ and $\E\{n_t^2|g_t\}=\E\{n_t^2\}$, which implies
$       \E\{r_t^2|g_t\}    = \E\big\{ \big(g_t \sqrt{\frac{P}{\sigma^2_{e_{t-1}}}} e_{t-1} +n_t\big)^2|g_t \big\}
                                     =\sigma_n^2+g_t^2P
$
 and
$
    \E\{r_te_{t-1}|g_t\}     =\E\big\{e_{t-1}\big(g_t \sqrt{\frac{P}{\sigma^2_{e_{t-1}}}} e_{t-1} +n_t \big)|g_t \big\}
                                         = g_t \sqrt{P \sigma_{e_{t-1}}^2}
$.
Since $ \E\{e_t^2|g_t\}=\E\{e_{t-1}^2|g_t\} -\frac{\E\{r_te_{t-1}|g_t\}^2}{\E\{r_t^2|g_t\}} $, we also have
$
     \E\{e_t^2|g_t\}  =\E\{e_{t-1}^2\}-\frac{g_t^2P \E\{e_{t-1}^2\}}{\sigma_n^2 + g_t^2P}
                                 =\E\{e_{t-1}^2\} \frac{\sigma_n^2}{\sigma_n^2 + g_t^2P}
$,   which implies
$       \E\{e_t^2\}  = \E\{\E\{e_t^2|g_t\}\}
                             =\E\{e_{t-1}^2\} \E\{ \frac{\sigma_n^2}{\sigma_n^2 + g_t^2P}\}
$.
   Thus if $\lambda^2  \E\{ \frac{\sigma_n^2}{\sigma_n^2 + g_t^2P}\}<1$, the designed encoder/decoder pair can guarantee  \eqref{eq:estimationThenControlRequirement2}. In view of Lemma \ref{lemma:estimationThenControl}, the sufficiency of Theorem \ref{theorem:theorem1} is proved.

\begin{remark}
We can show that $C_{\mathrm{MSC}}$ is smaller than the Shannon capacity, which is $C_{\mathrm{Shannon}}=\E\{c_t\}$ \cite{Goldsmith1997}. From Jensen's inequality, we know that $\E\{2^{-2c_t}\}\ge 2^{-2\E\{c_t\}}$ and the equality holds if and only if $c_t$ is a constant. Thus it follows that $  C_{\mathrm{MSC}} = \frac{1}{2}\mathrm{log}\frac{1}{\E\{2^{-2c_t}\}}\le \frac{1}{2}\mathrm{log} \frac{1}{2^{-2\E\{c_t\}}}=\E\{c_t\}=C_{\mathrm{Shannon}} $ and the equality holds if and only if $c_t$ is a constant.
\end{remark}

\begin{remark}
By letting $g_t$ in~\eqref{iffConditionForScalarSystem}  be the Bernoulli distribution with failure probability $\epsilon$, and taking the limit $\sigma_n^2 \rightarrow 0$ and $P \rightarrow \infty$, we can show that the necessary and sufficient condition to ensure mean square stabilizability for the real erasure channel is $\epsilon < \frac{1}{\lambda^2}$, which recovers the result in \cite{Elia2005}.  If we let $g_t$  be a constant with $g_t=1$, then the studied power constrained fading channel degenerates to the AWGN channel and the \eqref{iffConditionForScalarSystem} degenerates to $\frac{1}{2} \mathrm{log} (1+\frac{P}{\sigma_n^2})< \mathrm{log} |\lambda|$, which recovers the result in \cite{Sahai2006, Braslavsky2007}. If $\sigma_n^2=0$ and the event $g_t=0$ has zero probability measure, the right hand side of \eqref{iffConditionForScalarSystem} becomes infinity. Then for any $\lambda$, \eqref{iffConditionForScalarSystem} holds automatically. This is reasonable since we have assumed that $g_t$ is known at the decoder side, thus if there is no additive noise, the channel resembles a perfect communication link. Since \eqref{scalarDynamics} is controllable, we can always find a pair of encoder and decoder to  stabilize the system.
\end{remark}

\section{Vector Systems}

For vector systems, the situation becomes complicated by the fact that we have $n$ sources $x_{i,0}$ and only one channel, where $x_{i,0}$ denotes the $i$-th element of $x_0$.  Firstly, we would analyze  the achievable minimal mean square estimation error for estimating $x_0$ over the channel \eqref{channel1} during one channel use. Consider the following Markov chain
\begin{equation*}
  X_0\rightarrow S_t= f_t(X_0)\rightarrow R_t\rightarrow \hat{X}_t=h_t(R_t)
\end{equation*}
where $X_0\in \mathbb{R}^n$ denotes the Gaussian initial state with covariance matrix $\Sigma_{x_0}$; $f_t(\cdot)$ is a scalar-valued function denoting the channel encoder for \eqref{channel1}; $R_t$ denotes the channel output and $\hat{X}_t$ is the estimation of $X_0$ formed by the decoder with decoding rule $h_t(\cdot)$.

Denote the estimation error as $e_t=X_0-\hat{X}_t$, in view of Lemma \ref{lemma:varianceIsBoundedByEntropyPower}, we have
$
  \frac{1}{n} \mathrm{tr} \E \{ e_te_t' \} \ge \frac{1}{2\pi e} e^{\frac{2}{n} H(e_t|R_t)}
$.
Since
\begin{equation*}
\begin{aligned}
  H(e_t|R_t) &= H(X_0-h_t(R_t)|R_t)=H(X_0|R_t)\\
                    &=H(X_0)-I(X_0;R_t)\\
                    &\overset{(a)}=H(X_0)-I(f_t(X_0);R_t)\\
                    &\ge \frac{1}{2} \mathrm{ln} ((2 \pi e)^n \mathrm{det}(\Sigma_{x_0}))-\frac{1}{2} \mathrm{ln}(1+\frac{g_t^2P}{\sigma_n^2})\\
  \end{aligned}
\end{equation*}
where $(a)$ follows from Lemma \ref{lemma:mutualInformationEqual}, thus we have
\begin{equation*}
   \mathrm{tr} \E \{ e_te_t' \} \ge n \; \mathrm{det} (\Sigma_{x_0}) \big( \frac{\sigma_n^2}{g_t^2P+\sigma_n^2} \big)^{\frac{1}{n}}
\end{equation*}
From the above inequality, we know that the minimal mean square error is  given in terms of  $\frac{\sigma_n^2}{g_t^2P+\sigma_n^2} $. However, this is only for the sum of the estimation errors $e_{i,t}$ with $e_{i,t}$ being the $i$-th element of $e_t$. There is no indication on the convergence speed for every single $e_{i,t}$. Lemma \ref{lemma:estimationThenControl} implies that we should design the encoder/decoder to render that $\mathrm{lim}_{t \rightarrow \infty} \lambda_i^{2t} \E\{e_{i,t}^2\}=0$ for all $i$, which places separate requirements for the convergence speed of each $e_{i,t}$. Thus we need to optimally allocate channel resources to each unstable state variable.

The previous analysis also implies that we should treat the unstable modes of $A$ separately. Here we  focus on the real Jordan canonical form of system \eqref{LTIDynamics}. Let $\lambda_1, \ldots, \lambda_d$ be the distinct unstable eigenvalues (if $\lambda_i$ is complex, we exclude from this list the complex conjugates $\lambda_i^*$) of $A$ in \eqref{LTIDynamics}, and let $m_i$ be the algebraic multiplicity of each $\lambda_i$. The real Jordan canonical form $J$ of $A$ then has the block diagonal structure $J=\mathrm{diag}(J_1,\ldots,J_d)\in \mathbb{R}^{n\times n}$, where the block $J_i\in \mathbb{R}^{\mu_i\times \mu_i}$ and $\mathrm{det} J_i =\lambda_i^{\mu_i}$, with
\begin{equation*}
  \mu_i =\left\{ \begin{matrix}
    m_i & \mathrm{if} \;\; \lambda_i \in \mathbb{R}\\
    2m_i & \mathrm{otherwise}
  \end{matrix} \right.
\end{equation*}
It is clear that we can equivalently study the following dynamical system instead of \eqref{LTIDynamics}
\begin{equation}
\label{realJordanCanonicalForm}
  x_{k+1}=Jx_k+TBu_i
\end{equation}
for some similarity matrix $T$. Let $\mathcal{U}=\{1,\ldots, d\}$ denote the index set of unstable eigenvalues.
\begin{theorem}
  \label{theorem:VectorResult}
  There exists a causal encoder/decoder pair $\{f_t\}, \{h_t\}$, such that the LTI dynamics \eqref{LTIDynamics} can be stabilized over the communication channel \eqref{channel1} in mean square sense if
\begin{equation}
\label{SufficientConditionForVectorSystem}
 \sum _{i=1}^d \mu_i\mathrm{log} |\lambda_i| < - \frac{1}{2} \mathrm{log} { \E\{ \frac{\sigma_n^2}{\sigma_n^2+g_t^2P} \}}
  \end{equation}
  and only if $(\mathrm{log}|\lambda_1|, \ldots, \mathrm{log}|\lambda_d|) \in \mathbb{R}^{d}$ satisfy that for all $v_i \in \{0, \ldots, m_i\}$ and $i\in \mathcal{U}$
  \begin{equation}
  \label{necessityForVectorSystem}
    \sum_{i \in \mathcal{U}} a_i v_i \mathrm{log}|\lambda_i|< - \frac{v}{2} \mathrm{log} {\E \big\{ \big(\frac{\sigma_n^2}{\sigma_n^2+g_t^2P}\big)^{\frac{1}{v}}\big\}}
  \end{equation}
  where $v=\sum_{i\in \mathcal{U}} a_iv_i$, and $a_i=1$ if $\lambda_i \in \mathbb{R}$, and $a_i=2$ otherwise.
\end{theorem}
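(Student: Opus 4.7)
The plan is to split Theorem \ref{theorem:VectorResult} into its sufficiency and necessity halves, lifting the scalar arguments of Section III to the vector setting through the real Jordan canonical form \eqref{realJordanCanonicalForm}. In both directions the extra idea is to decouple the unstable modes: for sufficiency by time-sharing channel uses among them, and for necessity by projecting onto invariant subspaces and re-running the entropy-power recursion in the appropriate dimension.

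\emph{Sufficiency.} I would allocate to each Jordan block $J_i$ a fraction $\gamma_i>0$ of the channel uses with $\sum_{i=1}^d \gamma_i=1$, and within those slots round-robin across the $\mu_i$ scalar coordinates of the block. In each dedicated slot I would run the scalar Schalkwijk-Kailath-type scheme from \eqref{encoder}--\eqref{decoder} on the targeted scalar component of $x_0$. After $t$ time steps every coordinate in block $i$ has been refined $\gamma_i t/\mu_i$ times, so by the mean-square calculation already performed in the scalar sufficiency proof its estimation-error variance decays as $\E\{\frac{\sigma_n^2}{\sigma_n^2+g_t^2P}\}^{\gamma_i t/\mu_i}$. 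Lemma \ref{lemma:estimationThenControl} applied to $J_i$, whose $t$-th power grows at most like $|\lambda_i|^t$ times a polynomial in $t$, then requires $|\lambda_i|^2\E\{\frac{\sigma_n^2}{\sigma_n^2+g_t^2P}\}^{\gamma_i/\mu_i}<1$ for each $i$. Rewriting this as $\mu_i\log|\lambda_i|<-\tfrac{\gamma_i}{2}\log\E\{\frac{\sigma_n^2}{\sigma_n^2+g_t^2P}\}$ and summing over $i$ shows that a feasible allocation $\{\gamma_i\}$ exists iff \eqref{SufficientConditionForVectorSystem} holds, which completes this half.

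\emph{Necessity.} Fix any admissible tuple $(v_1,\ldots,v_d)$ with $v_i\in\{0,\ldots,m_i\}$ and let $v=\sum_{i\in\mathcal{U}}a_iv_i$. Within each real Jordan block $J_i$ the first $a_iv_i$ basis vectors span a $J$-invariant subspace, so projecting $x_t$ onto the direct sum of these subspaces gives a sub-system $\tilde x_{t+1}=\tilde J\tilde x_t+\tilde B u_t$ of dimension $v$ with $|\det\tilde J|=\prod_{i\in\mathcal{U}}|\lambda_i|^{a_iv_i}$, and any mean-square-stabilizing causal scheme for the full system also mean-square stabilizes $\tilde x_t$. I would then rerun the two-step entropy-power argument of Section III verbatim on the $v$-dimensional $\tilde X_t$: for the communication update, Jensen's inequality with exponent $2/v$ together with Lemma \ref{lemma:mutualInformationEqual}, the data-processing inequality, and the channel capacity bound $I(S_t;R_t\mid R^{t-1}=r^{t-1},G^t=g^t)\le c_t$ yields $N(\tilde X_t\mid R^t,G^t)\ge \E\{(\frac{\sigma_n^2}{\sigma_n^2+g_t^2P})^{1/v}\}N(\tilde X_t\mid R^{t-1},G^{t-1})$; for the dynamics update, Theorem 8.6.4 of \cite{Cover2006} gives $N(\tilde X_{t+1}\mid R^t,G^t)\ge|\det\tilde J|^{2/v}N(\tilde X_t\mid R^t,G^t)$. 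Necessary asymptotic decay of $N(\tilde X_t\mid\cdot)$, forced by Lemma \ref{lemma:varianceIsBoundedByEntropyPower}, then requires the product of the two factors to be strictly less than one, which rearranges to exactly \eqref{necessityForVectorSystem}.

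The main obstacle lies on the necessity side: one must justify carefully that the projected sub-state $\tilde X_t$ is effectively driven through the same power-constrained fading channel with the same capacity bound $c_t$. This reduces to applying the data-processing inequality through $\tilde X_t = \Pi X_t$ to upgrade the scalar-case bound $I(S_t;R_t\mid\cdot)\le c_t$ into $I(\tilde X_t;R_t\mid\cdot)\le c_t$, and to checking that the independence of $G_t$ from $\tilde X_t$, needed in the analogue of step $(h)$ of the scalar proof, still holds because $\tilde X_t$ is a deterministic function of past quantities independent of $G_t$. A secondary technical point is treating the real and complex-conjugate Jordan blocks uniformly so that the factor $a_i$ appears correctly in $|\det\tilde J|$. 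On the sufficiency side the only care needed is that the polynomial Jordan-block factors $t^{\mu_i-1}$ are dominated by the exponential decay, which is automatic once the strict inequality holds.
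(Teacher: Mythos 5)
Your proposal is correct and follows essentially the same route as the paper: TDMA time-sharing of the scalar Schalkwijk-type scheme across the unstable modes for sufficiency (with the same weight choice $\gamma_i \propto \mu_i\log|\lambda_i|$), and projection onto the $v$-dimensional invariant subspace followed by the $v$-dimensional entropy-power recursion for necessity. You actually spell out a few details the paper leaves implicit (the round-robin within nontrivial Jordan blocks and the polynomial growth factors), but the underlying argument is the same.
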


\begin{proof}
For the proof of necessity, notice that each block $J_i$ has an invariant real subspace $\mathcal{A}_{v_i}$ of dimension $a_i v_i$, for any $v_i \in \{ 0 , \ldots,m_i\}$. Consider the subspace $\mathcal{A}$ formed by taking the product of the invariant subspaces $\mathcal{A}_{v_i}$ for each real Jordan block. The total dimension of $\mathcal{A}$ is $v=\sum_{i\in \mathcal{U}} a_iv_i$. Denote by $x^\mathcal{V}$ of the components of $x$ belonging to $\mathcal{A}$.  Then $x^{\mathcal{V}}$ evolves as
\begin{equation}
\label{stackedDynamics}
  x_{k+1}^{\mathcal{V}}=J^{\mathcal{V} }x_{k+1}^{\mathcal{V}} +QT u_k
\end{equation}
where $Q$ is a transformation matrix and $\mathrm{det} J^{\mathcal{V}}=\Pi_{i\in \mathcal{U}} \lambda_i^{ a_i v_i}$.
 Since $X_k$ is mean square stable, it is necessary that the subdynamics \eqref{stackedDynamics} is mean square stable. Similar to the necessity proof in Theorem 1, we may derive the necessary condition  \eqref{necessityForVectorSystem}.  And this completes the proof of necessity.

 Here we prove the sufficiency using the idea of Time Division Multiple Access (TDMA).
 Based on the previous encoder/decoder design for scalar systems, the following information transmission strategy is designed for the vector system. Without loss of generality, here we assume that $\lambda_1,\ldots, \lambda_d$ are real and $m_i=1$. For other cases, readers can refer to the analysis discussed in Chapter 2 of \cite{Zaidi2014}. Specifically, under this assumption, $J$ is a diagonal matrix and $d=n$. The sensor transmits periodically with a period of $\tau$. During one channel use, the sensor only transmits the estimation error of the $j$-th value of $x_0$ using the scheme devised for scalar systems. The relative transmission frequency  for the $j$-th value of $x_0$ is scheduled to be $\alpha_j$ among the $\tau$ transmission period with $\sum_{j=1}^n\alpha_j= 1$. The receiver maintains an array that represents the most recent estimation of $x_0$, which is set to $0$  for $t=0$. When the information about the $j$-th value of $x_0$ is transmitted, only the estimation of the $j$-th value of $x_0$ is updated at the decoder side, and the other estimation values remain unchanged. After updating the estimation, the controller takes action as the one designed in Lemma \ref{lemma:estimationThenControl}.
If the diagonal elements of $ A^t \E\{e_te_t'\}(A')^t$ converge to zeros asymptotically, i.e., for $i=1,\ldots, n$, $\mathrm{lim}_{t\rightarrow \infty} \lambda_i^{2t}  \E\{e_{i,t}^2\}=0$ , the conditions in Lemma \ref{lemma:estimationThenControl} can be satisfied. Since the transmission is scheduled periodically, we only need to require that $\mathrm{lim}_{k\rightarrow \infty} \lambda_i^{2k\tau}
\E\{e_{i, k\tau}^2\}=0$, $\forall i=1,\ldots, n$. Following our designed transmission scheme, we have $\E\{e_{i,k\tau}^2\}=\E\{\frac{\sigma_n^2}{\sigma_n^2+g_t^2P}\}^{\alpha_i k\tau}\E\{e_{i,0}^2\}$. If $\lambda_i ^{2}\E\{\frac{\sigma_n^2}{\sigma_n^2+g_t^2P}\}^{\alpha_i }<1 $ for all $i=1,\ldots n$,  the sufficient condition in Lemma \ref{lemma:estimationThenControl} can be satisfied. To complete the proof, we only need to show the equivalence between the requirement  $\lambda_i ^{2}\E\{\frac{\sigma_n^2}{\sigma_n^2+g_t^2P}\}^{\alpha_i }<1 $ for all $i=1,\ldots n$ and \eqref{SufficientConditionForVectorSystem}. On one hand, since $\sum_{i=1}^n \alpha_i=1$, if $\lambda_i ^{2}\E\{\frac{\sigma_n^2}{\sigma_n^2+g_t^2P}\}^{\alpha_i }<1 $ for all $i=1,\ldots n$, we know that \eqref{SufficientConditionForVectorSystem} holds. On the other hand, if \eqref{SufficientConditionForVectorSystem} holds, we can simply choose  $\alpha_i=\frac{\mathrm{log}|\lambda_i|}{\sum_i \mathrm{log}|\lambda_i|}$, which satisfies the requirement that $\sum_{i=1}^n \alpha_i=1$ and $\lambda_i ^{2}\E\{\frac{\sigma_n^2}{\sigma_n^2+g_t^2P}\}^{\alpha_i }<1 $ for all $i=1,\ldots, n$. The sufficiency is proved. \end{proof}

\section{Numerical Illustrations}
\subsection{Scalar Systems}
The authors in \cite{Xiao2011}  derive the mean square capacity of a power constrained fading channel with linear encoders/decoders. The necessary and sufficient condition  for scalar systems is  $ \frac{1}{2} \mathrm{log}(1+\frac{\mu^2_gP}{\sigma_g^2 P+\sigma_n^2}) >  \mathrm{log} |\lambda|  $ with $\mu_g$ and $\sigma_g^2$  being the mean and variance of $g_t$. We can similarly define the mean square capacity of the power constrained fading channel with linear encoders/decoders as $C_{\mathrm{MSL}}=\frac{1}{2} \mathrm{log}(1+\frac{\mu^2_gP}{\sigma_g^2P+\sigma_n^2})$.  Simply assume that the fading follows the Bernoulli distribution with failure probability $\epsilon$, then the Shannon capacity, the mean square capacity achievable with causal encoders/decoders and the mean square capacity achievable with linear encoders/decoders are given as
$ C_{\mathrm{Shannon_{BD}}}  = \frac{1-\epsilon}{2} \mathrm{log}\big(1+\frac{P}{\sigma_n^2}\big) $,
$ C_{\mathrm{MSC_{BD}}}  =-\frac{1}{2} \mathrm{log} \big(\frac{\sigma_n^2+\epsilon P}{\sigma_n^2+P}\big) $,
 $C_{\mathrm{MSL_{BD}}}  =  \frac{1}{2} \mathrm{log} \big(1+\frac{(1-\epsilon)^2P}{(1-\epsilon) \epsilon P+\sigma_n^2}\big) $.
For fixed $P$ and $\sigma_n^2$, the channel capacities are functions of $\epsilon$. Let $P=1$ and $\sigma_n^2=1$, the channel capacities in relation to the erasure probability are plotted in  Fig.  \ref{comparisonOfChannelCapacity}. It is clear that  $C_{\mathrm{Shannon_{BD}}} \ge C_{\mathrm{MSC_{BD}}}\ge C_{\mathrm{MSL_{BD}}}$ at any given erasure probability $\epsilon$. This result is obvious since we have proved that the Shannon capacity is no smaller than the mean square capacity with causal encoders/decoders. Besides, we have more freedom in designing the causal encoders/decoders compared with the linear encoders/decoders, thus allowing to achieve a higher capacity.  The three kinds of capacity degenerate to the same when $\epsilon=0$ and $\epsilon=1$, which represent the AWGN channel case and the disconnected case respectively.
\begin{figure}
  \centering
  \includegraphics[width=0.37\textwidth]{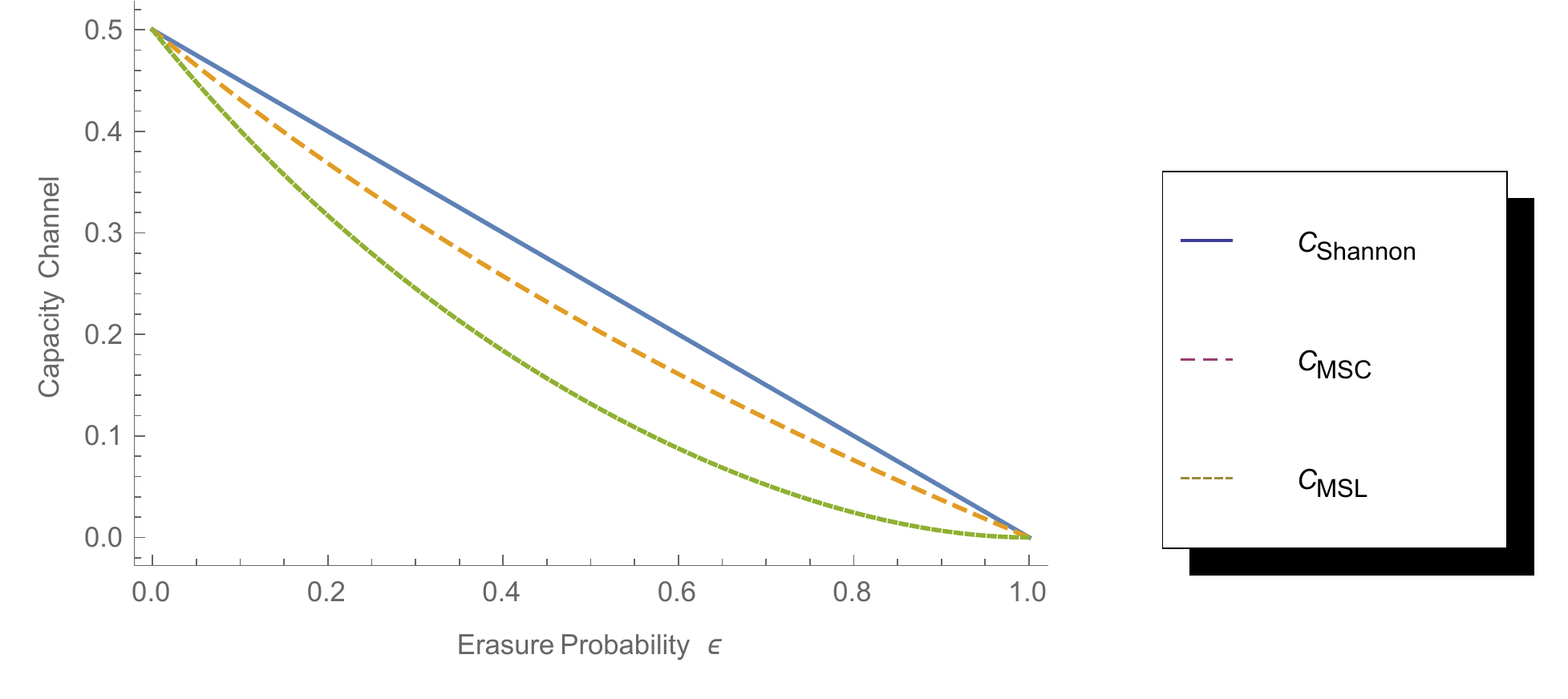}\\
  \caption{Comparison of different channel capacities when $P=1$, $\sigma_n^2=1$}\label{comparisonOfChannelCapacity}
\end{figure}

\subsection{Vector Systems}
Consider the two dimensional LTI system \eqref{realJordanCanonicalForm} with
$
J=\left[
        \begin{smallmatrix}
          \lambda_1 & 0\\
          0     & \lambda_2
        \end{smallmatrix}
    \right]
$, and the communication channel is \eqref{channel1} in which the fading follows the Bernoulli distribution with failure probability $\epsilon$. In view of Theorem \ref{theorem:VectorResult}, a sufficient condition to ensure mean square stabilizability is that $(\mathrm{log}|\lambda_1|, \mathrm{log}|\lambda_2|)$ should lie in the region of
$               \mathrm{log}|\lambda_1|+ \mathrm{log} |\lambda_2 | < C_{\mathrm{MSC_{BD}}} $. The necessary requirement is given by the following region in $(\mathrm{log}|\lambda_1|,\mathrm{log}|\lambda_2|)$ plane
           \begin{equation*}
           \left\{
          \begin{aligned}
            &\log{|\lambda_1|}  < C_{\mathrm{MSC_{BD}}}, \;\;  \log { |\lambda_2 |} < C_{\mathrm{MSC_{BD}}} \\
             & \log {|\lambda_1|} +\log{| \lambda_2 |} < - \log{{\big( \epsilon+(1-\epsilon) \big( \frac{\sigma_n^2}{\sigma_n^2+P} \big)^{\frac{1}{2}} \big) }}
          \end{aligned}\right.
        \end{equation*}
The necessary and sufficient condition to ensure mean square stability using linear encoders/decoders for this system is given in \cite{Xiao2011}, which states that $(\mathrm{log}|\lambda_1|, \mathrm{log}|\lambda_2|)$ should be in the region constrained by
$      \mathrm{log}|\lambda_1|+ \mathrm{log} |\lambda_2 | <  C_{\mathrm{MSL_{BD}}}$.
Selecting $P = 1$, $\sigma_n^2 = 1$ and $\epsilon = 0.8$, we can plot the regions for $(\mathrm{log}|\lambda_1|, \mathrm{log}|\lambda_2|)$ indicated by the sufficiency and necessity in Theorem \ref{theorem:VectorResult} and that indicated in Theorem 3.1 in \cite{Xiao2011} in Fig. \ref{Fig.vectorSystemStabilityRegion}. We can observe that the region of $(\mathrm{log}|\lambda_1|, \mathrm{log}|\lambda_2|)$ that can be stabilized with the designed causal encoders/decoders in Section IV is much larger than that can be stabilized by linear encoders/decoders in \cite{Xiao2011}. Thus by extending endocers/decoders from linear settings to causal requirements, we can tolerate more unstable systems.
\begin{figure}
  \centering
  \includegraphics[width=0.29\textwidth]{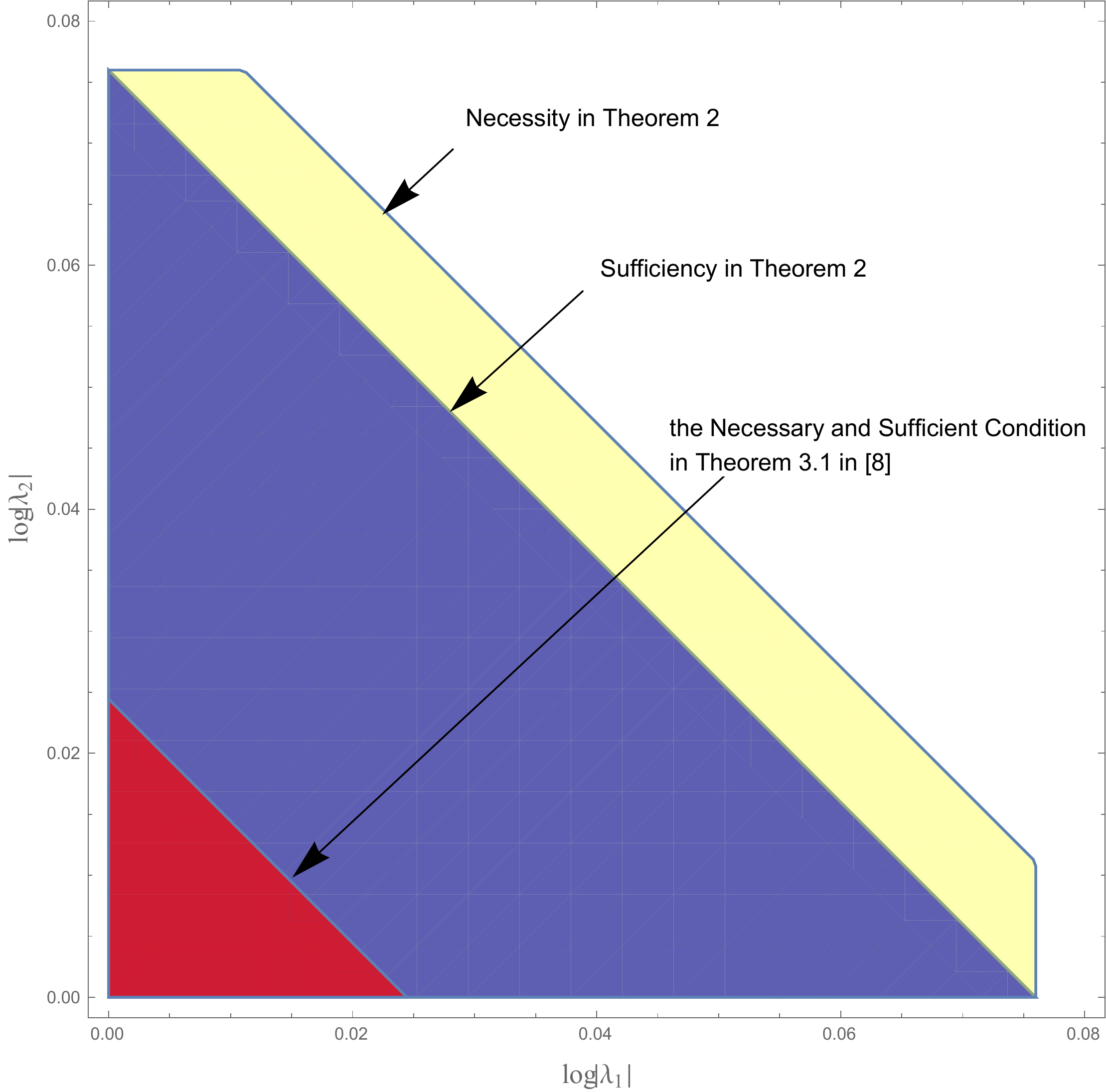}\\
  \caption{Stability region of $(\mathrm{log}|\lambda_1|, \mathrm{log}|\lambda_2|)$ indicated by Theorem 2 for a vector system}\label{Fig.vectorSystemStabilityRegion}
\end{figure}

\section{Conclusion}
This paper characterized the requirement for a power constrained fading channel to allow the existence of a causal encoder/decoder pair that can mean square stabilize a discrete-time LTI system. The mean square capacity of the power constrained fading channel with causal encoders/decoders was given. It was shown that this mean square capacity is smaller than the Shannon capacity and they coincide with each other for some special situations.  Throughout the paper, the capacity was derived with the assumption that there exists a perfect feedback link from the channel output to the channel input. What would the capacity be for power constrained fading channels when there is no such feedback link  or there is only a noisy feedback link is still under investigation.
\bibliographystyle{ieeetr}
\bibliography{reference}

\end{document}